\newtheorem{thm}{Theorem}
\newtheorem{theorem}{Theorem}[section]
\newtheorem{proposition}[theorem]{Proposition}
\def\irr#1{{\rm  Irr}(#1)}
\title[On a theorem of Artin]{On a theorem of Artin and the dimension of the space spanned by the rational valued characters of a group}
\begin{document}
	
\author[M. L. Lewis]{Mark L. Lewis}
\address{Department of Mathematical Sciences, Kent State University, Kent, OH 44242, USA}
\email{lewis@math.kent.edu}
	
\begin{abstract}  
In this paper, we sharpen a theorem of Artin to show that for a finite group, the dimension of the subspace of class functions spanned by the rational valued characters equals the number of conjugacy classes of cyclic subgroups .	
\end{abstract}
	
\subjclass[2010]{Primary 20C15; Secondary 20D25}

\keywords{class function, rational valued characters, conjugacy classes of cyclic subgroups}

	%\date{}
	
\maketitle

%\begin{document}

\section{Introduction}
	
Throughout this paper, all groups are finite except where noted.  In this paper, we consider the space of complex valued class functions on a group.  A {\it class function} on a group $G$ is a function defined on $G$ that is constant on the conjugacy classes of $G$ that takes on complex values.  If $\eta$ and $\zeta$ are two class functions of $G$, then $\eta+ \zeta$ is defined by $\left( \eta + \zeta\right) (g) = \eta (g) + \zeta (g)$ for all $g$ and if $\alpha$ is a complex number, then $\alpha \eta$ is defined by $(\alpha \eta) (g) = \alpha (\eta (g))$.  In this way, the set of class functions of $G$ is vector space over the complex numbers. 

In Chapter 2 of \cite{text}, when $G$ is a group, it is shown that the class functions on $G$ has $\irr G$ as a basis.  The subset of {\it characters} of $G$ is the set of linear combinations of irreducible characters whose coefficients are nonnegative intergers that are not all $0$, and the set of {\it generalized characters} are the linear combinations of irreducible character whose coefficients are integers.   

A class function or character of $G$ is rational valued if every value lies in the field of rational numbers.  In Isaacs' classic character text book \cite{text}, he proves as Theorem 5.21, the following theorem of E. Artin:

\begin{thm}[Isaacs' version of Artin's theorem]\label{artin}
Let $\chi$ be a rational valued character of $G$.  Then
$$\chi = \sum \frac {a_H}{|{\bf N} (H):H|}	(1_H)^G,$$
where $H$ runs over the cyclic subgroups of $G$ and $a_H \in Z$.
\end{thm}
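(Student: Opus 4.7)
The plan is linear-algebraic: I identify a basis of the rational-valued class functions on $G$ consisting of induced characters $(1_H)^G$, then invert the change-of-basis matrix to recover the coefficients.

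First, a rational-valued class function is constant on $\mathbb{Q}$-conjugacy classes, and two elements $g,h \in G$ are $\mathbb{Q}$-conjugate if and only if $\langle g\rangle$ and $\langle h\rangle$ are $G$-conjugate. Hence the space of rational-valued class functions has dimension $k$, the number of conjugacy classes of cyclic subgroups of $G$. Fix representatives $H_1,\dots,H_k$ with generators $g_i \in H_i$, ordered so that whenever $H_i$ is $G$-subconjugate to $H_j$ one has $i\le j$. Using the standard induction formula
\[
(1_H)^G(g)=\frac{|\cent G g|\cdot|g^G\cap H|}{|H|},
\]
the matrix $M_{ij}:=(1_{H_j})^G(g_i)$ is upper triangular, since $(1_{H_j})^G(g_i)$ vanishes whenever $\langle g_i\rangle$ is not $G$-subconjugate to $H_j$. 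The diagonal entry $M_{ii}$ simplifies because every $G$-conjugate of $g_i$ lying in $H_i$ is necessarily a generator of $H_i$ and thus arises from conjugation by an element of $\norm G{H_i}$; the orbit-stabilizer theorem then yields $|g_i^G\cap H_i|=|\norm G{H_i}:\cent G{g_i}|$ and hence $M_{ii}=|\norm G{H_i}:H_i|$. Thus $M$ is invertible and the $(1_{H_j})^G$ form a basis for the rational-valued class functions.

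Writing $\chi=\sum_j c_j(1_{H_j})^G$, I would solve the triangular system $\chi(g_i)=\sum_{j\ge i}c_j M_{ij}$ by back-substitution. Character values are algebraic integers, so a rational-valued character is integer-valued; hence each $\chi(g_i)\in\mathbb{Z}$. The topmost equation gives $c_k=\chi(g_k)/|\norm G{H_k}:H_k|$, matching the stated form with $a_{H_k}:=\chi(g_k)\in\mathbb{Z}$. The full sum in the statement, which ranges over all cyclic subgroups rather than just representatives, is then recovered by setting $a_H=0$ on cyclic subgroups outside our system of representatives; this is permissible because $(1_{H'})^G=(1_H)^G$ whenever $H'$ and $H$ are $G$-conjugate.

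The main obstacle is verifying that each subsequent $c_j$ likewise has denominator dividing just the single factor $|\norm G{H_j}:H_j|$, rather than an accumulated product of such factors inherited from the higher $c_i$. I would handle this by replacing the step-by-step back-substitution with a Möbius inversion on the poset of conjugacy classes of cyclic subgroups: express the numerator $a_{H_j}$ as a signed sum of $\chi$-values at generators of cyclic overgroups of $H_j$, and use identities among the orbit counts $|g_i^G\cap H_j|$ to verify that the combined denominator collapses to the single factor $|\norm G{H_j}:H_j|$. This denominator collapse is the technical heart of the proof and the only place where the character-theoretic content goes beyond the pure linear algebra of a triangular system.
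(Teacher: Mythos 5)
Your setup is sound and your own diagnosis is accurate, but the proof is incomplete: the one step you defer --- showing that each coefficient $c_j$ has denominator dividing the \emph{single} factor $|N_G(H_j):H_j|$ rather than a product of such factors accumulated through back-substitution --- is precisely the content of Isaacs' form of the theorem, and you offer only a plan (``I would handle this by a M\"obius inversion \dots and use identities among the orbit counts'') rather than an argument. Everything you actually prove (triangularity of $M$, the computation $M_{ii}=|N_G(H_i):H_i|$, integrality of the values $\chi(g_i)$) yields only that the $c_j$ are rational, i.e.\ the weaker Serre-type statement. The missing computation can be completed as follows, and it is where the cyclic-group-specific input enters. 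For $g$ with $\langle g\rangle$ conjugate to $H_j$, one has $g^x\in H_i$ if and only if $\langle g\rangle^x$ equals the \emph{unique} subgroup $K$ of $H_i$ of order $|H_j|$, so $\{x\in G: g^x\in H_i\}$ is either empty or a single coset of $N_G(\langle g\rangle)$; this gives the clean identity $|H_i|\,(1_{H_i})^G=\sum_j |N_G(H_j)|\,\Phi_j$, summed over the classes of cyclic subgroups subconjugate to $H_i$ (equivalently over the divisors of $|H_i|$), where $\Phi_j$ is the characteristic function of the set of elements generating a conjugate of $H_j$. M\"obius inversion over the divisor lattice of $|H_i|$ then gives $|N_G(H_i)|\,\Phi_i=\sum_{d\mid |H_i|}\mu(|H_i|/d)\,d\,(1_{K_d})^G$ with $K_d\le H_i$ of order $d$, and the denominator collapse comes from the containment $N_G(H_i)\le N_G(K_d)$ (each $K_d$ is characteristic in the cyclic group $H_i$), which makes $\mu(|H_i|/d)\,|N_G(K_d):N_G(H_i)|$ the required integer $a_{K_d}$. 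Since $\chi=\sum_i\chi(x_i)\Phi_i$ with the $\chi(x_i)$ integers, the theorem follows. Without this (or an equivalent) computation your argument does not establish $a_H\in Z$, which is the whole point.

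A smaller but real error: your opening claim that every rational-valued \emph{class function} is constant on the classes $\{g : \langle g\rangle \sim \langle h\rangle\}$ is false --- the characteristic function of a single conjugacy class is a rational class function, and indeed these span \emph{all} class functions. What is true, and what you need, is that a rational-valued \emph{generalized character} is constant on these classes, because $\chi(g^m)=\chi(g)^\sigma$ for the Galois automorphism $\sigma$ sending a primitive $o(g)$-th root of unity $\epsilon$ to $\epsilon^m$ (Lemma 5.22 of Isaacs). Consequently the sentence ``the space of rational-valued class functions has dimension $k$'' is incorrect as written; the space of dimension $k$ is the span of the rational-valued characters, or equivalently the space of class functions constant on these equivalence classes. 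This is easily repaired, but it should be repaired.
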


% In a theorem of Artin (Theorem 5.21 of \cite{text}), it is shown that every rational valued character of $G$ can be written as a linear combination with rational coefficients of the characters $(1_H)^G$ as $H$ runs over the cyclic subgroups of $G$.   
Interestingly, this theorem does not seem to be covered in many other character theory textbooks; although it is covered in Serre's book \cite{serre}.  The version covered in Serre's book is somewhat different than the one in Isaacs, and we discuss it at the end of this introduction.  In fact, we have found very few other publications that mention this result.  The two places where we have found mention of Artin's theorem are Theorem 1 of \cite{kramar} and Theorem 3 of \cite{yang}.  The paper \cite{turull} also applies Artin's theorem, and as we will note, Turull obtains result in a similar vein to ours.  

Our purpose in this note is to sharpen Isaacs's version of Artin's theorem.  We begin by pinning down the dimension of the subspace of the class functions that is spanned by the rational valued characters.   One should compare this result with Corollary 4 of \cite{turull}.

\begin{thm} \label {dimension}
Let $G$ be a group.  Then the subspace of complex valued class functions of $G$ spanned by the rational valued characters of $G$ has dimension equal to the number of conjugacy classes of cyclic subgroups of $G$.
\end{thm}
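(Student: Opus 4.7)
Let $V$ denote the $\mathbb{C}$-span of the rational valued characters of $G$, and let $c$ be the number of conjugacy classes of cyclic subgroups of $G$. The plan is to squeeze $\dim V$ between $c$ and $c$ by proving matching inequalities. Fix representatives $H_1,\dots,H_c$ of the conjugacy classes of cyclic subgroups, ordered so that $|H_1|\geq|H_2|\geq\cdots\geq|H_c|$.

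For the upper bound I would invoke Theorem~\ref{artin} directly: every rational valued character is a $\mathbb{Q}$-linear combination of induced characters $(1_H)^G$ with $H$ cyclic, and $(1_H)^G$ depends only on the $G$-conjugacy class of $H$. This yields
\[
V \subseteq \mathrm{span}_{\mathbb{C}}\bigl\{(1_{H_1})^G,\dots,(1_{H_c})^G\bigr\},
\]
so $\dim V\leq c$.

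For the lower bound, observe that each $(1_{H_i})^G$ is a permutation character, hence integer valued, and therefore is itself a rational valued character that lies in $V$. It is thus enough to show the $c$ functions $(1_{H_i})^G$ are linearly independent. I would pick a generator $g_i$ of $H_i$ for each $i$ and form the $c\times c$ matrix $M$ with $M_{ij}=(1_{H_j})^G(g_i)$. Using the standard formula $(1_H)^G(g)=\tfrac{1}{|H|}\bigl|\{x\in G : x^{-1}gx \in H\}\bigr|$, the entry $M_{ij}$ is nonzero only when $H_i=\langle g_i\rangle$ is $G$-conjugate to a subgroup of $H_j$. Combined with $|H_i|\geq|H_j|$ whenever $i\leq j$, this forces $M_{ij}=0$ for $i<j$, because in that regime a subgroup conjugate is a conjugate. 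The diagonal entry $M_{ii}$ is strictly positive, since $x=1$ already contributes. Hence $M$ is lower triangular with nonzero diagonal, so invertible, giving $\dim V\geq c$.

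The step I expect to be the main obstacle is the linear independence argument. The delicate points are selecting the correct (descending by order) ordering of the $H_i$ so that the evaluation matrix becomes triangular, and verifying that no diagonal entry vanishes; these reduce to careful bookkeeping with the induced character formula and the observation that among cyclic subgroups, ``conjugate to a subgroup of equal or smaller order'' collapses to ``conjugate.'' As a pleasant byproduct, the argument shows that the permutation characters $(1_{H_i})^G$ actually form a basis of $V$.
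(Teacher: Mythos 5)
Your proof is correct, but it takes a genuinely different route from the paper. The paper leans on the internals of Isaacs's proof of Theorem 5.21: it works with the characteristic functions $\Phi_i$ of the equivalence classes under ``$\langle x\rangle$ conjugate to $\langle y\rangle$,'' quotes Lemma 5.22 of Isaacs to see that every rational character is an integer combination of the $\Phi_i$, and quotes the body of that proof to see that each $|N_G(H_i)|\Phi_i$ is a rational combination of induced principal characters from cyclic subgroups; the $\Phi_i$ are obviously independent, so they give a basis of the right cardinality. You instead keep Artin's theorem as a black box for the upper bound $\dim V\le c$, and for the lower bound you prove directly that the $c$ permutation characters $(1_{H_i})^G$ are linearly independent, via the evaluation matrix $M_{ij}=(1_{H_j})^G(g_i)$ made triangular by ordering the $H_i$ by decreasing order; the key observation that a cyclic subgroup conjugate into a cyclic subgroup of no larger order must be conjugate to it, together with the positivity of the diagonal entries, is exactly right. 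Your approach is more self-contained (it does not require unpacking Isaacs's proof) and has the bonus, which you note, of delivering Theorem~\ref{basis}(2) immediately: the $(1_{H_i})^G$ themselves form a basis of $V$, rather than the less transparent basis $\{|N_G(H_i)|\Phi_i\}$ produced by the paper. What the paper's route buys in exchange is the explicit description of $V$ as the span of the $\Phi_i$, i.e.\ as the space of class functions constant on each ``rational class,'' which is useful for the discussion in the paper's final section.
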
 

We now pin down a basis for the subspace of class functions spanned by rational valued characters. 

\begin{thm} \label{basis}
Let $G$ be a group and let $C_1, \dots, C_n$ be cyclic subgroups of $G$.  Then the following are true:
\begin{enumerate}
\item $(1_{C_1})^G, \dots, (1_{C_n})^G$ spans the subspace of complex valued class functions of $G$ spanned by the rational valued characters if and only if every conjugacy class of cyclic subgroups of $G$ is represented among the $C_i$'s.
\item $(1_{C_1})^G, \dots, (1_{C_n})^G$ is a basis for the subspace of complex valued class functions of $G$ spanned by the rational valued characters if and only if the $C_i$'s is a set of representative for all the conjugacy classes of cyclic subgroups of $G$.
\end{enumerate}
\end{thm}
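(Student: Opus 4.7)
The plan is to combine Artin's theorem (Theorem \ref{artin}) and the dimension count of Theorem \ref{dimension}, using the elementary observation that induced characters from conjugate subgroups coincide. Write $V$ for the subspace of class functions spanned by the rational valued characters of $G$.

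I would first record two easy facts. (a) For any cyclic $H \leq G$ and any $g \in G$, we have $(1_H)^G = (1_{H^g})^G$, which is immediate from the usual formula for induced characters. (b) Each $(1_H)^G$ is an integer-valued character, hence lies in $V$. Combined with Artin's theorem, which writes every rational valued character as a rational linear combination of such induced characters, this gives
\[
V \;=\; \mathrm{span}_{\mathbb{C}}\bigl\{(1_H)^G : H \leq G \text{ cyclic}\bigr\}.
\]

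The crucial step would then be to observe that if $R_1, \dots, R_k$ is any full set of representatives for the conjugacy classes of cyclic subgroups of $G$, then by fact (a) the set $\{(1_{R_j})^G\}_{j=1}^k$ already spans $V$; since it has $k$ elements while $\dim V = k$ by Theorem \ref{dimension}, it must be linearly independent and hence a basis of $V$.

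From here both parts follow quickly. For (1), if every conjugacy class of cyclic subgroups appears among $C_1, \dots, C_n$, then a subfamily of the $(1_{C_i})^G$ constitutes a full family of representatives as in the crucial step and already spans $V$; conversely, if some class is omitted, then each $(1_{C_i})^G$ equals $(1_{R_{j(i)}})^G$ for an index $j(i)$ lying in a proper subset $J \subsetneq \{1, \dots, k\}$, so the $(1_{C_i})^G$ lie in $\mathrm{span}\{(1_{R_j})^G : j \in J\}$, which is a proper subspace of $V$ because the $(1_{R_j})^G$ are linearly independent. For (2), a basis must span $V$ (so by (1) every class is represented) and be linearly independent (so no two of the $C_i$ are conjugate, by fact (a)), forcing the $C_i$ to be exactly a set of representatives; the converse direction is the crucial step. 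The main obstacle has already been absorbed into Theorem \ref{dimension}, so what remains here is essentially bookkeeping built on facts (a) and (b) and that dimension count, and I would not expect any genuine difficulty in writing out the details.
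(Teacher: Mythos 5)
Your proposal is correct and follows essentially the same route as the paper: both arguments rest on the observation that conjugate cyclic subgroups induce the same character, on Artin's theorem to identify the span of the $(1_{C_i})^G$ with the subspace spanned by the rational valued characters, and on the dimension count of Theorem \ref{dimension} to upgrade a spanning set of the right cardinality to a basis. Your write-up is if anything slightly more careful than the paper's (e.g., in noting explicitly that each $(1_H)^G$ is rational valued and in arguing the converse of (1) via a proper subspace spanned by part of a basis), but the underlying argument is the same.
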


At this point, we want to mention the version of Artin's theorem that appears in Serre's book \cite{serre}.  In Section 9.2 of that book he states as a Corollary: Each character of $G$ is a linear combination with rational coefficients of characters induced by characters of cyclic subgroups of $G$.  Notice that unlike Isaacs, he is obtaining all characters of $G$ instead of just the rational characters of $G$ by inducing all of the characters of the cyclic subgroups instead of only the principal characters.  Also, one can note that this is a corollary of a more general result.   Rather than give the version of the more general result as stated in Serre, we think the version found in in \cite{overflow} is a better fit with this paper, so we state it here:

\begin{thm}[Serre's version of Artin's theorem]
Let $G$ be a group and let $H_1, \dots, H_n$ be subgroups of $G$.  Then every element of $G$ is conjugate to an element in one of the $H_i$'s if and only if every character of $G$ is a linear combination of characters induced from the $H_i$'s with rational coefficients.
\end{thm}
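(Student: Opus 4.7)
The plan is to prove each direction separately. For the easy direction ($\Leftarrow$), suppose every character of $G$ is a $\mathbb{Q}$-linear combination of induced characters from the $H_i$'s. I would use the fact that the induced character $(\varphi)^G$ of any character $\varphi$ of $H_i$ vanishes on every $g \in G$ not conjugate to an element of $H_i$; hence any $\mathbb{Q}$-linear combination of such induced characters vanishes off $\bigcup_i \bigcup_{g \in G} g H_i g^{-1}$. If some conjugacy class $K$ missed this union, every $\chi \in \irr{G}$ would vanish on $K$, forcing the indicator class function of $K$ to be orthogonal to every irreducible character and hence zero, a contradiction.

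For the harder direction ($\Rightarrow$), my strategy is a dimension count bridging from complex to rational coefficients. Let $W$ be the $\mathbb{C}$-span of the induced characters $(\varphi)^G$ as $\varphi$ ranges over characters of the $H_i$'s. First I would apply Frobenius reciprocity to show that $W$ fills the whole space of class functions of $G$: any class function $f$ orthogonal to every such $(\varphi)^G$ satisfies $\langle f|_{H_i}, \varphi \rangle_{H_i} = 0$ for every $i$ and every $\varphi$, so $f|_{H_i} = 0$ for every $i$; since $f$ is a class function, the conjugacy hypothesis then forces $f = 0$ on all of $G$. Hence $\dim_{\mathbb{C}} W = k$, the number of conjugacy classes of $G$.

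Next, let $V$ be the $\mathbb{Q}$-span of these same induced characters, and let $X_{\mathbb{Q}}$ denote the $\mathbb{Q}$-span of $\irr{G}$ inside the class function space. Since $\irr{G}$ is a $\mathbb{C}$-basis of the class functions, it is in particular $\mathbb{Q}$-linearly independent, so $\dim_{\mathbb{Q}} X_{\mathbb{Q}} = k$. Each induced character is an integer combination of irreducible characters of $G$, so $V \subseteq X_{\mathbb{Q}}$, giving $\dim_{\mathbb{Q}} V \leq k$. Conversely, picking any $\mathbb{Q}$-basis of $V$ of size $\dim_{\mathbb{Q}} V$, its $\mathbb{C}$-span contains every induced character and hence equals $W$, so $\dim_{\mathbb{C}} W \leq \dim_{\mathbb{Q}} V$. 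Combining these yields $\dim_{\mathbb{Q}} V = k$, whence $V = X_{\mathbb{Q}}$, and every character of $G$, lying in $X_{\mathbb{Q}}$, is a $\mathbb{Q}$-linear combination of induced characters from the $H_i$'s.

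The main obstacle I expect is precisely this passage from complex to rational scalars: the Frobenius reciprocity argument only delivers a $\mathbb{C}$-spanning statement directly. The resolution is to compare $\dim_{\mathbb{Q}} V$ with $\dim_{\mathbb{C}} W$ inside the ambient space of class functions, using that $V$ sits in the $k$-dimensional $\mathbb{Q}$-subspace $X_{\mathbb{Q}}$ yet has $\mathbb{C}$-span of full dimension $k$. Once this bookkeeping is in place, everything else is a direct application of the induction formula and orthogonality.
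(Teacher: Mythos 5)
Your proof is correct. Both directions hold up: the easy direction correctly uses that $\varphi^G$ vanishes on elements not conjugate into $H_i$ together with (second) orthogonality to rule out a missed class, and the harder direction correctly bridges from $\mathbb{C}$ to $\mathbb{Q}$ by sandwiching $\dim_{\mathbb{Q}} V$ between $\dim_{\mathbb{C}} W = k$ and $\dim_{\mathbb{Q}} X_{\mathbb{Q}} = k$, using that induced characters have integer coordinates in the basis $\irr G$. Note, however, that the paper does not actually prove this statement: it is quoted as background from Serre's book (Section 9.2) and the formulation in the cited MathOverflow post, and the paper's own proofs concern the related Theorems \ref{dimension}, \ref{basis} and \ref{restatement}, which instead lean on the machinery of Isaacs' Theorem 5.21 (the characteristic functions $\Phi_i$ of the classes $\mathcal{C}_i$ and the explicit integrality of the coefficients $a_H/|{\bf N}(H):H|$). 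Your argument is essentially Serre's: Frobenius reciprocity shows the induced characters $\mathbb{C}$-span all class functions exactly when the $H_i$ conjugate-cover $G$, and the descent to rational coefficients is pure linear algebra over the lattice spanned by $\irr G$. What your route buys is a short, self-contained proof that avoids the number-theoretic bookkeeping of Isaacs' version (you get no control on denominators, but the statement asks for none); what it does not give is the sharper information the paper is after, namely the exact dimension count in terms of conjugacy classes of cyclic subgroups, for which one restricts to the $H_i$ cyclic and to principal characters.
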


Wit this in mind and using our refinement, we can make the following restatement Isaacs version of Artin's theorem:

\begin{thm}[Restatement of Isaacs' version of Artin's Theorem] \label{restatement}
Let $G$ be a (finite) group.  Then every cyclic subgroup of $G$ is conjugate to one of $C_1, \dots, C_n$ if and only if every rational-valued irreducible character of $G$ is a $Q$-linear combination of the induced characters from the $C_i$'s.
\end{thm}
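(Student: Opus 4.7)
The plan is to prove the two implications separately, drawing on Theorem~\ref{artin} for the forward direction and on Theorem~\ref{basis}(1) for the reverse direction. For the forward direction, suppose every cyclic subgroup of $G$ is conjugate to one of $C_1,\ldots,C_n$, and let $\chi$ be a rational-valued irreducible character of $G$. Theorem~\ref{artin} gives
\[
\chi \;=\; \sum_H \frac{a_H}{|\mathbf{N}(H):H|}\,(1_H)^G,
\]
summed over cyclic subgroups $H\le G$ with $a_H\in\mathbb{Z}$. Induced characters from conjugate subgroups agree, so for each $H$ appearing one has $(1_H)^G = (1_{C_i})^G$ for some $i$; collecting like terms exhibits $\chi$ as the desired $\mathbb{Q}$-linear combination of the $(1_{C_i})^G$.

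For the reverse direction, assume every rational-valued irreducible character of $G$ lies in the $\mathbb{Q}$-span of the $(1_{C_i})^G$. The strategy is to show that the $\mathbb{C}$-span of the $(1_{C_i})^G$ coincides with the full subspace $W$ of class functions spanned by the rational-valued characters of $G$; once that is established, Theorem~\ref{basis}(1) immediately forces every conjugacy class of cyclic subgroups to be represented among the $C_i$. Each $(1_{C_i})^G$ is itself a rational-valued character, so the $\mathbb{C}$-span already sits inside $W$, and what remains is to verify the reverse inclusion using the hypothesis.

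The main obstacle is precisely this last step: extending the hypothesis, which a priori only controls rational-valued \emph{irreducible} characters, to a statement about every rational-valued character. I would attempt to close this gap by decomposing an arbitrary rational-valued character as a nonnegative-integer combination of Galois-orbit sums of irreducibles (each of which is itself a rational-valued character), and then reducing each orbit sum to the irreducible case by reapplying Theorem~\ref{artin}, coupled with a dimension count via Theorem~\ref{dimension} to check that the $\mathbb{C}$-span of the $(1_{C_i})^G$ actually attains the full dimension $\dim W$. Once the $\mathbb{C}$-spans agree, Theorem~\ref{basis}(1) finishes the argument.
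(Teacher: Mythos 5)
Your forward direction is exactly the paper's: apply Theorem~\ref{artin} and replace each $(1_H)^G$ by $(1_{C_i})^G$ for the $C_i$ conjugate to $H$; that part is fine. The reverse direction is where the trouble lies, and while you have correctly located the gap, the repair you sketch does not close it. Writing an arbitrary rational-valued character as a nonnegative-integer combination of Galois-orbit sums $\chi^{\rm rat}$ does not ``reduce each orbit sum to the irreducible case'': when $\chi$ is not rational-valued, $\chi^{\rm rat}$ is a rational-valued character that is \emph{not} irreducible, and none of its irreducible constituents is rational-valued, so the hypothesis --- which speaks only of rational-valued \emph{irreducible} characters --- gives no information about it. Reapplying Theorem~\ref{artin} to $\chi^{\rm rat}$ merely expresses it through induced characters from all cyclic subgroups, which is precisely what you need to avoid, and the dimension count cannot get started: the hypothesis only forces the $\mathbb{C}$-span of the $(1_{C_i})^G$ to contain the span of the rational-valued irreducible characters, which is in general a proper subspace of the span $W$ of all rational-valued characters.

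In fact the reverse implication is false as literally stated. Take $G$ cyclic of order $3$, $n=1$, and $C_1=G$. The only rational-valued irreducible character of $G$ is $1_G=(1_{C_1})^G$, so the character-theoretic condition holds, yet the trivial subgroup is a cyclic subgroup of $G$ not conjugate to $C_1$. (Here $\dim W=2$ but the rational-valued irreducible characters span only a one-dimensional subspace.) The paper's own proof of the converse quietly reads the hypothesis as ``every rational-valued character'' rather than ``every rational-valued irreducible character''; with that reading your outline closes at once --- each $(1_{C_i})^G$ lies in $W$ and, by hypothesis, $W$ lies in their $\mathbb{C}$-span, so the two spaces coincide and Theorem~\ref{basis}(1) applies --- and this is exactly the paper's argument. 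The fix is therefore to restate the converse hypothesis for rational-valued characters (or for the rationalizations $\chi^{\rm rat}$ of all irreducible characters), not just for the rational-valued irreducible ones.
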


%We close this introduction with a couple of comments.  First, notice that the induced characters from the $C_i$'s will have integer values and so, $Q$-linear combinations of these characters will be rational valued; so we can only hope to obtain rational valued characters.  It is not possible to obtain characters that are not rational valued in this way.  Second, we need each cyclic subgroup of $G$ to be actually be conjugate to one of the $C_i$'s.  It is not enough for a conjugate of the cyclic subgroup to be contained in one the $C_i$'s.  In other words, we need a stronger condition than for the $C_i$'s to be a cover of $G$.  

Finally, we would like to thank Yiftach Barnea for bringing this problem to our attention.  We note that Isaacs' and Serre's books are written at essentially the same time and do not refer to each other, so it seems likely that neither of them knew of the other's books when writing their books, and so, it seems that the two versions of the theorem were written independently of each other.  So far, we have not been able to find the original source for this theorem.  We would also like to thank Geoffrey Robinson for bringing our paper to Alex Turull's attention and Alex for pointing out the existence of \cite{turull}.

\section{Proofs}

We will see that the work for proving these two theorems is contained in the proof of Theorem 5.21 of \cite{text}.  Note that the characteristic functions for the conjugacy classes are rational class functions and form a basis for the vector space of all class functions.   %Thus, the subspace spanned by the rational characters is usually a proper subspace 
In order to prove Theorem \ref{dimension} we will rely heavily on the proof of Theorem 5.21 of \cite{text}.  This proof is also influenced by the remarks before the statement of Theorem 3 and its proof in \cite{yang}.

\begin{proof}[Proof of Theorem \ref{dimension}]
Following the proof of Theorem 5.21 in \cite{text}, we define an equivalence relation $\equiv$ on $G$ by $x \equiv y$ if $\langle x \rangle$ is conjugate to $\langle y \rangle$.   Let $\mathcal {C}_1, \dots, \mathcal {C}_m$ be the distinct $\equiv$-classes of $G$ and let $\Phi_i$ be the characteristic function of $\mathcal {C}_i$ so that $\Phi_i (x) = 1$ if $x \in \mathcal {C}_i$ and $\Phi_i (x) = 0$ otherwise.  Choose a representative $x_i \in \mathcal {C}_i$; set $H_i = \langle x_i \rangle$ and $n_i = |H_i|$.  We have 
$$|\mathcal {C}_i| = |G:N_G (H_i)| \phi (n_i)$$
where $\phi$ is the Euler $\phi$ function.  Observe that $H_1, \dots, H_n$ is a set of representatives of the conjugacy classes of cyclic subgroups of $G$.  

It is mentioned in the proof of Theorem 5.21 of \cite{text} that Lemma 5.22 of \cite{text} implies that every rational valued character is an integer sum of the $\Phi_i$'s.  It is not difficult to see that the $\Phi_i$'s will be linearly independent.  Set $S = \{ |N_G (H_1)| \Phi_1, \dots, |N_G (H_m)|\Phi_m \}$.  The bulk of the proof of Theorem 5.21 of \cite{text} shows that $|N_G (H_i)| \Phi_i$ is a sum of induced characters from cyclic subgroups with rational coefficients.  Thus, the elements of $S$ lie in the subspace of class functions spanned by rational characters.  We see that $S$ spans this subspace and is linearly independent.  Thus, $S$ is a basis for the subspace of class functions spanned by the rational characters.  Since $S$ is in bijection with the set of all conjugacy classes of cyclic subgroups of $G$, this yields the conclusion. 
\end{proof}

We now work to prove Theorem \ref{basis}.

\begin{proof}[Proof of Theorem \ref{basis}]
Observe that if $C_i$ and $C_j$ are conjugate in $G$, then $(1_{C_i})^G = (1_{C_j})^G$.  By Artin's theorem (Theorem 5.21 of \cite{text}), we know the subspace spanned by the rational characters is spanned by the set of characters induced from the cyclic subgroups.  Hence, if the $C_i$'s contain representatives of all the conjugacy classes of cyclic subgroups, then the characters they induce will span the subspace spanned by the rational characters.  If the $C_i$'s are a set of representatives of the conjugacy of cyclic subgroups, then by Theorem \ref{dimension}, the set of induced characters is a spanning set that has the size of a basis, so it must be a basis.  Conversely, notice that by the first sentence, the number of distinct induced characters will equal the number of conjugacy classes of cyclic subgroups represented among the $C_i$'s.  It follows that the induced characters will span the $C_i$ only if all of the conjugacy classes of cyclic subgroups are represented.  Furthermore, if the induced characters form a basis, then we do not have any duplicates among the conjugacy classes, so the $C_i$'s are a set of representatives.  
\end{proof}

We now give a quick proof of the restatement of Artin's theorem.

\begin{proof}[Proof of Theorem \ref{restatement}]
Suppose every cyclic subgroup of $G$ is conjugate to one of $C_1, \dots, C_n$.  If $C$ is a cyclic subgroup of $G$, then $C$ is conjugate $C_i$ for some $i$, and $(1_C)^G = (1_{C_i})^G$, and we have the result by Theorem \ref{artin}.  Conversely, suppose every rational character is a $q$-linear combination of the $(1_{C_i})^G$'s.  This implies that the rational characters all lie in the complex subspace spanned by the $(1_{C_i})^G$'s.  It follows that the complex subspace spanned by the $(1_{C_i})^G$ has the same dimension as the complex subspace spanned by the rational valued characters.  By Theorem \ref{basis}, we can conclude that every conjugacy class of cyclic subgroups must be conjugate to one of the $C_i$'s.  This proves the result.  
\end{proof}

\section{Another viewpoint}

In this section, we want to take another viewpoint on Artin's theorem.  Let $G$ be a group.  We let $n$ be the exponent of $G$ and let $F = Q(e^{2\pi i/n})$.  By a theorem of Brauer (Theorem 10.3 of \cite{text}), we know that $F$ is a splitting field for $G$.  (See Chapter 9 of \cite{text} for the definition of splitting field.)  For our purposes, this means that every (irreducible) character of $G$ has values in $F$.  Hence, $\mathcal {G} = Gal (F/Q)$ acts on the irreducible characters of $G$.  If $\chi \in \irr G$, then $Q (\chi)$ is the subfield of $F$ obtained by adjoining the values of $\chi$ to $Q$.  It is shown in Lemma 9.17 of \cite{text} that the orbit of $\chi$ under $\mathcal {G}$ is $\{ \chi^\sigma \mid \sigma \in {\rm Gal} (Q(\chi)/Q) \}$.  

Given $\chi \in \irr G$, we define $\chi^{\rm rat} = \sum_{\sigma \in {\rm Gal} (Q(\chi)/Q)} \chi^\sigma$.  It is not difficult to see that $\chi^{\rm rat}$ is rational valued.  We say that $\chi^{\rm rat}$ is the {\it rationalization} of $\chi$.  If $\chi_1, \dots, \chi_l$ are a set of representatives of the orbits of $\mathcal (G)$ on $\irr G$, then it is known and not difficult to show that every rational character is a sum of $(\chi_1)^{\rm rat}, \dots, (\chi_l)^{\rm rat}$.  (I.e., every rational character is a linear combination of the $(\chi_i)^{\rm rat}$ with nonnegative integer coefficients that are not all $0$.)  With this point of view, we obtain the following characterization of Artin's result:

\begin{proposition} \label{count}
Let $G$ be a group.  Then the number of conjugacy classes of cyclic subgroups of $G$ equals the number of orbits of the action ${\rm Gal}(F/Q)$ on $\irr G$.  
\end{proposition}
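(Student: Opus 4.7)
The plan is to identify the $\chi_i^{\rm rat}$'s as a basis for the subspace of class functions spanned by the rational-valued characters, and then invoke Theorem \ref{dimension} to conclude that the number of such rationalizations, which is the number of $\mathcal{G}$-orbits on $\irr G$, equals the number of conjugacy classes of cyclic subgroups.

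First, I would verify that each $\chi_i^{\rm rat}$ is itself a rational-valued character (it is a nonzero nonnegative integer combination of irreducible characters, and the Galois group fixes it because it is a sum over a full $\mathcal{G}$-orbit under the reindexing $\sigma' \mapsto \sigma\sigma'$ on ${\rm Gal}(Q(\chi_i)/Q)$). Thus the span of the $\chi_i^{\rm rat}$ sits inside the subspace $V$ of class functions spanned by rational-valued characters. Conversely, the remark immediately before the proposition observes that every rational-valued character is a nonnegative integer combination of the $\chi_i^{\rm rat}$'s, so $V$ is contained in the span of $\chi_1^{\rm rat}, \dots, \chi_l^{\rm rat}$. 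Hence these rationalizations span $V$.

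Next I would show the $\chi_i^{\rm rat}$'s are linearly independent. Since $\chi_1, \dots, \chi_l$ are orbit representatives, the orbits $\{\chi_i^\sigma : \sigma \in {\rm Gal}(Q(\chi_i)/Q)\}$ are pairwise disjoint subsets of $\irr G$. So $\chi_1^{\rm rat}, \dots, \chi_l^{\rm rat}$, when expanded in the basis $\irr G$ of the full class function space, have disjoint supports and positive coefficients, and therefore must be linearly independent. Combining with the previous step, they form a basis for $V$.

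Finally, I would read off the dimension: $\dim V = l$, the number of $\mathcal{G}$-orbits on $\irr G$. By Theorem \ref{dimension}, $\dim V$ also equals the number of conjugacy classes of cyclic subgroups of $G$, which gives the desired equality. The argument is essentially a dimension count bridged by Theorem \ref{dimension}; the only mildly delicate point is checking that $\chi^{\rm rat}$, as defined by a sum over ${\rm Gal}(Q(\chi)/Q)$ rather than over all of $\mathcal{G}$, is still Galois-invariant, and this follows from Lemma 9.17 of \cite{text} because distinct cosets of ${\rm Gal}(F/Q(\chi))$ in $\mathcal{G}$ produce distinct Galois conjugates of $\chi$.
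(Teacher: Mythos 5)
Your proof is correct and follows exactly the route the paper intends: the remarks preceding Proposition \ref{count} (that the $\chi_i^{\rm rat}$ are rational-valued and that every rational-valued character is a nonnegative integer combination of them) establish that the rationalizations form a basis of the rational span, and Theorem \ref{dimension} then supplies the dimension count. Your added verifications of Galois-invariance and of linear independence via disjoint supports in the basis $\irr G$ are sound and simply make explicit what the paper leaves as ``known and not difficult to show.''
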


On the surface, the fact that the number of conjugacy classes of cyclic subgroup equals the number of orbits of $\mathcal {G}$ on $\irr G$ seems surprising.  On the other hand, using the hint for Problem 2.12 of \cite{text}, we can define an action of $\mathcal {G}$ on the conjugacy classes of $G$.  Let $n$ be the exponent of $G$.  One can show for $\sigma \in \mathcal {G}$ that there exists $m$ with $(m,n) = 1$ so that $\chi^\sigma (g) = \chi (g^m)$ for all $g \in G$.  If we write $g^G$ for the conjugacy class of $g$, then we can define $(g^G)^\sigma = (g^m)^G$.  It is not difficult to see that as $\sigma$ runs over all the elements of $\mathcal {G}$ that $g^m$ will run over all the generators for $\langle g \rangle$ and so, the union of the $(g^G)^\sigma$ will the union of the conjugacies of the generators of $\langle g \rangle$.  We can view the conjugacy class of the cyclic subgroup $\langle g \rangle$ to be the ``rationalization'' of $g^G$.  With this in mind, perhaps the equality in Proposition \ref{count} is not so surprising.

%Appendix:

\end{document}